\newtheorem{theorem}{Theorem}
\theoremstyle{plain}
\newtheorem{definition}{Definition}
\newtheorem{lemma}{Lemma}
\numberwithin{equation}{section}
\begin{document}
\title[Two New Convex Dominated Functions]{Two New Different Kinds of Convex
Dominated Functions and Inequalities via Hermite-Hadamard Type}
\author{M.Emin \"{O}zdemir$^{\blacktriangle }$}
\address{$^{\blacktriangle }$Atat\"{u}rk University, K.K. Education Faculty,
Department of Mathematics, 25240, Campus, Erzurum, Turkey}
\email{emos@atauni.edu.tr}
\author{Mevl\"{u}t Tun\c{c}}
\address{Department of Mathematics, Faculty of Art and Sciences, Kilis 7
Aralik University, Kilis, 79000, Turkey}
\email{mevluttunc@kilis.edu.tr}
\author{Havva Kavurmac\i $^{\blacktriangle ,\blacksquare }$}
\email{hkavurmaci@atauni.edu.tr}
\thanks{$^{\blacksquare }$Corresponding Author}
\date{February 9, 2012}
\subjclass[2000]{ Primary 26D15, Secondary 26D10, 05C38}
\keywords{convex dominated function, Hermite-Hadamard's inequality, $Q\left(
I\right) -$functions, $P(I)-$functions}

\begin{abstract}
In this paper, we establish two new convex dominated function and then we
obtain new Hadamard type inequalities related to this definitions.
\end{abstract}

\maketitle

\section{INTRODUCTION}

Let $f:I\subseteq 
\mathbb{R}
\rightarrow 
\mathbb{R}
$ be a convex function and let $a,b\in I,$ with $a<b.$ The following
inequality%
\begin{equation*}
\ \ \ f\left( \frac{a+b}{2}\right) \leq \frac{1}{b-a}\int_{a}^{b}f(x)dx\leq 
\frac{f(a)+f(b)}{2}
\end{equation*}%
is known in the literature as Hadamard's inequality. Both inequalities hold
in the reversed direction if $f$ is concave.

In \cite{GL}, Godunova and Levin introduced the following class of functions.

\begin{definition}
A function $f:I\subseteq 
\mathbb{R}
\rightarrow 
\mathbb{R}
$ is said to belong to the class of $Q(I)$ if it is nonnegative and for all $%
x,y\in I$ and $\lambda \in (0,1)$ satisfies the inequality;%
\begin{equation}
f(\lambda x+(1-\lambda )y)\leq \frac{f(x)}{\lambda }+\frac{f(y)}{1-\lambda }.
\label{1}
\end{equation}
\end{definition}

They also noted that all nonnegative monotonic and nonnegative convex
functions belong to this class and also proved the following motivating
result:

If $f\in Q(I)$ and $x,y,z\in I,$ then%
\begin{equation}
f(x)(x-y)(x-z)+f(y)(y-x)(y-z)+f(z)(z-x)(z-y)\geq 0.  \label{2}
\end{equation}%
In fact (\ref{1}) is even equivalent to (\ref{2}). So it can alternatively
be used in the definition of the class $Q(I).$

In \cite{DPP1}, Dragomir et al. defined the following new class of functions.

\begin{definition}
A function $f:I\subseteq 
\mathbb{R}
\rightarrow 
\mathbb{R}
$ is $P$ function or that $f$ belongs to the class of $P(I),$ if it is
nonnegative and for all $x,y\in I$ and $\lambda \in \lbrack 0,1],$ satisfies
the following inequality;%
\begin{equation*}
f(\lambda x+(1-\lambda )y)\leq f(x)+f(y).
\end{equation*}
\end{definition}

In \cite{DPP1}, Dragomir et al. proved the following inequalities of
Hadamard type for class of $Q(I)-$ functions and $P-$ functions.

\begin{theorem}
Let $f\in Q(I),$ $a,b\in I$ with $a<b$ and $f\in L_{1}[a,b].$ Then the
following inequalities hold:%
\begin{equation*}
\ \ \ f\left( \frac{a+b}{2}\right) \leq \frac{4}{b-a}\int_{a}^{b}f(x)dx
\end{equation*}%
and%
\begin{equation*}
\frac{1}{b-a}\int_{a}^{b}p\left( x\right) f(x)dx\leq \frac{f\left( a\right)
+f\left( b\right) }{2}
\end{equation*}%
where $p\left( x\right) =\frac{\left( b-x\right) \left( x-a\right) }{\left(
b-a\right) ^{2}},$ $x\in \left[ a,b\right] .$
\end{theorem}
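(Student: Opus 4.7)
The plan is to establish both inequalities by applying the defining inequality (\ref{1}) at cleverly chosen points and then integrating the resulting pointwise bound with respect to $\lambda$ over $[0,1]$. This is the standard template for passing from a convexity-type inequality to a Hermite--Hadamard-type integral inequality, but the specific choices of $x$, $y$, and the integrating weight are what produce the constant $4$ and the weight $p(x)$.

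For the left inequality, I would apply (\ref{1}) with $\lambda = 1/2$ to the pair of points $x = \lambda a + (1-\lambda)b$ and $y = (1-\lambda)a + \lambda b$, whose midpoint is $\tfrac{a+b}{2}$. This yields
\begin{equation*}
f\!\left(\tfrac{a+b}{2}\right) \;\le\; 2\bigl[f(\lambda a + (1-\lambda)b) + f((1-\lambda)a + \lambda b)\bigr].
\end{equation*}
Integrating both sides over $\lambda\in[0,1]$ and performing the change of variables $t = \lambda a + (1-\lambda)b$ (respectively $t = (1-\lambda)a + \lambda b$) to convert each integral into $\tfrac{1}{b-a}\int_a^b f(t)\,dt$ produces the factor $4$ on the right-hand side.

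For the right inequality, I would apply (\ref{1}) directly with $x=a$, $y=b$ to obtain
\begin{equation*}
f(\lambda a + (1-\lambda)b) \;\le\; \frac{f(a)}{\lambda} + \frac{f(b)}{1-\lambda}.
\end{equation*}
Multiplying through by $\lambda(1-\lambda)$ (this is the key step that tames the singularities of the $Q(I)$ definition at $\lambda=0,1$) gives
\begin{equation*}
\lambda(1-\lambda)\,f(\lambda a + (1-\lambda)b) \;\le\; (1-\lambda)f(a) + \lambda f(b),
\end{equation*}
and integrating over $[0,1]$ yields $\tfrac{f(a)+f(b)}{2}$ on the right. On the left, the substitution $t = \lambda a + (1-\lambda)b$ transforms $\lambda(1-\lambda)$ into exactly $\tfrac{(b-t)(t-a)}{(b-a)^2} = p(t)$, giving the claimed weighted integral.

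The only real subtlety is the second inequality: a naive integration of $\frac{f(a)}{\lambda}+\frac{f(b)}{1-\lambda}$ diverges, so one must insert the weight $\lambda(1-\lambda)$ before integrating. Once that observation is made, both computations reduce to routine substitutions, and the constants $4$ and the explicit weight $p(x)$ emerge automatically from the Jacobian of the linear change of variables.
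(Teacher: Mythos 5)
Your proof is correct: both the midpoint trick with $\lambda=\tfrac{1}{2}$ applied to the symmetric pair $ta+(1-t)b$, $(1-t)a+tb$, and the multiplication by $\lambda(1-\lambda)$ before integrating (which is indeed the essential step, since the unweighted right-hand side is not integrable) are exactly right. The paper itself states this theorem without proof, citing Dragomir, Pe\v{c}ari\'{c} and Persson, but your argument coincides with the technique the authors use to prove their own Theorem 3, so it is entirely consistent with the paper's approach.
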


\begin{theorem}
Let $f\in P(I),$ $a,b\in I$ with $a<b$ and $f\in L_{1}[a,b].$ Then the
following inequality holds:%
\begin{equation*}
f\left( \frac{a+b}{2}\right) \leq \frac{2}{b-a}\int_{a}^{b}f(x)dx\leq
2[f(a)+f(b)].
\end{equation*}
\end{theorem}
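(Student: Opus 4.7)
My plan is to derive both inequalities by applying the defining inequality of $P(I)$ at carefully chosen points and then integrating with respect to the parameter $\lambda$, using the standard substitution $x = \lambda a + (1-\lambda)b$, which converts $\int_0^1$ into $\frac{1}{b-a}\int_a^b$.

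For the left-hand inequality, the key idea is the midpoint trick: I would exploit the symmetric pair of points $\lambda a + (1-\lambda)b$ and $(1-\lambda)a + \lambda b$, whose arithmetic mean is $\frac{a+b}{2}$ independently of $\lambda$. Taking $\lambda = 1/2$ in the definition of a $P$-function yields $f\!\left(\frac{x+y}{2}\right) \leq f(x) + f(y)$ for all $x,y \in I$. Applied to this symmetric pair, this gives
\begin{equation*}
f\!\left(\frac{a+b}{2}\right) \leq f(\lambda a + (1-\lambda)b) + f((1-\lambda)a + \lambda b).
\end{equation*}
Integrating both sides over $\lambda \in [0,1]$ and noting that each of the two terms on the right contributes $\frac{1}{b-a}\int_a^b f(x)\,dx$ (the second by a trivial change of variable $\lambda \mapsto 1-\lambda$) produces the bound $f\!\left(\frac{a+b}{2}\right) \leq \frac{2}{b-a}\int_a^b f(x)\,dx$.

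For the right-hand inequality, I would apply the $P(I)$ definition directly with $x = a$ and $y = b$, which gives $f(\lambda a + (1-\lambda) b) \leq f(a) + f(b)$ for every $\lambda \in [0,1]$. Integrating over $\lambda \in [0,1]$ and changing variables to $x = \lambda a + (1-\lambda) b$ yields $\frac{1}{b-a}\int_a^b f(x)\,dx \leq f(a)+f(b)$, which upon multiplying by $2$ is exactly the desired upper bound.

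There is no real obstacle here; the argument is essentially a direct application of the defining inequality of $P(I)$ at the right points. The only care needed is to make sure the change of variable and symmetry step go through (i.e., that $f \in L_1[a,b]$ is enough to justify the integrations), which is immediate from the hypothesis. Both inequalities thus follow in parallel, with the factor of $2$ arising naturally: on the left from summing two integrals, on the right from the constant upper bound $f(a)+f(b)$.
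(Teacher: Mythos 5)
Your argument is correct: the midpoint trick with the symmetric pair $\lambda a+(1-\lambda)b$ and $(1-\lambda)a+\lambda b$ gives the left inequality after integration, and integrating $f(\lambda a+(1-\lambda)b)\leq f(a)+f(b)$ gives the right one (in fact you obtain the sharper bound $\frac{1}{b-a}\int_a^b f(x)\,dx\leq f(a)+f(b)$, of which the stated inequality is just the doubled form). The paper itself states this theorem without proof, citing Dragomir, Pe\v{c}ari\'{c} and Persson, and your argument is exactly the standard one used there, so there is nothing to add.
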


In \cite{DI} and \cite{DPP2}, the authors connect together some disparate
threads through a Hermite-Hadamard motif. The first of these threads is the
unifying concept of a $g-$convex-dominated function. In \cite{HHW}, Hwang et
al. established some inequalities of Fej\'{e}r type for $g-$convex-dominated
functions. Finally, in \cite{KOS} Kavurmac\i\ et al. introduced several new
different kinds of convex -dominated functions and then gave H-H type
inequalities for this classes of functions.

The main purpose of this paper is to introduce two new convex-dominated
function and then present new H-H type inequalities related to these
definitions.

\section{$\left( g\text{, }Q(I)\right) -$convex dominated functions}

\begin{definition}
Let a nonnegative function $g:I\subseteq 
\mathbb{R}
\rightarrow 
\mathbb{R}
$ belong to the class of $Q(I).$ The real function $f:I\subseteq 
\mathbb{R}
\rightarrow 
\mathbb{R}
$ is called $\left( g\text{, }Q(I)\right) -$convex dominated on $I$ if the
following condition is satisfied;%
\begin{eqnarray}
&&\left\vert \frac{f\left( x\right) }{\lambda }+\frac{f\left( y\right) }{%
1-\lambda }-f\left( \lambda x+\left( 1-\lambda \right) y\right) \right\vert 
\label{h1} \\
&&  \notag \\
&\leq &\frac{g\left( x\right) }{\lambda }+\frac{g\left( y\right) }{1-\lambda 
}-g\left( \lambda x+\left( 1-\lambda \right) y\right)   \notag
\end{eqnarray}%
for all $x,y\in I$ and $\lambda \in \left( 0,1\right) $.
\end{definition}

The next simple characterisation of $\left( g\text{, }Q(I)\right) -$convex
dominated functions holds.

\begin{lemma}
Let a nonnegative function $g:I\subseteq 
\mathbb{R}
\rightarrow 
\mathbb{R}
$ belong to the class of $Q(I)$ and $f:I\subseteq 
\mathbb{R}
\rightarrow 
\mathbb{R}
$ be a real function. The following statements are equivalent:
\end{lemma}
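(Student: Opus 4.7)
\medskip

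\noindent\textbf{Proof proposal.} The plan is to unfold the absolute value in (\ref{h1}) into a two-sided inequality and reinterpret each half as a $Q(I)$-membership condition for the auxiliary functions $g\pm f$. Writing $|A|\le B$ as $-B\le A\le B$, I will check that the upper half of (\ref{h1}) rearranges to
\[
(g-f)\bigl(\lambda x+(1-\lambda)y\bigr)\le \frac{(g-f)(x)}{\lambda}+\frac{(g-f)(y)}{1-\lambda},
\]
and the lower half to the analogous inequality with $g-f$ replaced by $g+f$. These are precisely the defining inequalities \eqref{1} asserting that $g\pm f$ satisfy the $Q(I)$-type convexity condition, so the purely algebraic half of the equivalence is immediate.

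The $Q(I)$ class, however, also demands that the member function be nonnegative, and this is not explicit in the arithmetic above. I will recover nonnegativity by specialising (\ref{h1}) to $x=y$: the common factor $\tfrac{1}{\lambda}+\tfrac{1}{1-\lambda}-1$ is strictly positive on $(0,1)$ and can be cancelled to yield the pointwise bound $|f(x)|\le g(x)$ for every $x\in I$, which forces both $g-f\ge 0$ and $g+f\ge 0$. This closes the equivalence between (\ref{h1}) and the statement ``$g+f,\,g-f\in Q(I)$''.

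Should the lemma statement (truncated in the excerpt) also feature a clause of the form ``there exist $h,k\in Q(I)$ with $g=\tfrac12(h+k)$ and $f=\tfrac12(h-k)$'', I will close the loop by taking $h:=g+f$, $k:=g-f$ in the forward direction, and substituting directly in the reverse direction. I do not anticipate any genuine obstacle here: the only subtlety is remembering that $Q(I)$-membership carries a nonnegativity requirement, which is exactly why the preliminary $x=y$ substitution must precede the declaration $g\pm f\in Q(I)$; all remaining steps are routine linear algebra on the defining inequality.
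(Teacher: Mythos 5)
Your proposal is correct and follows essentially the same route as the paper: unfold the absolute value in (\ref{h1}) into the two-sided inequality, rearrange each half into the Godunova--Levin inequality for $g-f$ and $g+f$ respectively, and handle the third clause by setting $l=g+f$, $k=g-f$. Your extra step of specialising to $x=y$ to extract $|f(x)|\le g(x)$, and hence the nonnegativity of $g\pm f$, is a careful addition that the paper's proof silently omits (the paper treats ``$(g,Q(I))$-convexity'' as just the functional inequality), but it does not change the structure of the argument.
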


\begin{enumerate}
\item $f$ is $\left( g\text{, }Q(I)\right) -$convex dominated on $I.$

\item The mappings $g-f$ and $g+f$ are $\left( g\text{, }Q(I)\right) -$
convex on $I.$

\item There exist two $\left( g\text{, }Q(I)\right) -$convex mappings $l,k$
defined on $I$ such that%
\begin{equation*}
\begin{array}{ccc}
f=\frac{1}{2}\left( l-k\right) & \text{and} & g=\frac{1}{2}\left( l+k\right)%
\end{array}%
.
\end{equation*}
\end{enumerate}

\begin{proof}
1$\Longleftrightarrow $2 The condition (\ref{h1}) is equivalent to%
\begin{eqnarray*}
&&g\left( \lambda x+\left( 1-\lambda \right) y\right) -\frac{g\left(
x\right) }{\lambda }-\frac{g\left( y\right) }{1-\lambda } \\
&\leq &\frac{f\left( x\right) }{\lambda }+\frac{f\left( y\right) }{1-\lambda 
}-f\left( \lambda x+\left( 1-\lambda \right) y\right) \\
&\leq &\frac{g\left( x\right) }{\lambda }+\frac{g\left( y\right) }{1-\lambda 
}-g\left( \lambda x+\left( 1-\lambda \right) y\right)
\end{eqnarray*}%
for all $x,y\in I$ and $\lambda \in \left( 0,1\right) .$ The two
inequalities may be rearranged as%
\begin{equation*}
\left( g+f\right) \left( \lambda x+\left( 1-\lambda \right) y\right) \leq 
\frac{\left( g+f\right) (x)}{\lambda }+\frac{\left( g+f\right) (y)}{%
1-\lambda }
\end{equation*}%
and%
\begin{equation*}
\left( g-f\right) \left( \lambda x+\left( 1-\lambda \right) y\right) \leq 
\frac{\left( g-f\right) (x)}{\lambda }+\frac{\left( g-f\right) (y)}{%
1-\lambda }
\end{equation*}%
which are equivalent to the $\left( g\text{, }Q(I)\right) -$convexity of $%
g+f $ and $g-f,$ respectively.

2$\Longleftrightarrow $3 Let we define the mappings $f,$ $g$ as $f=\frac{1}{2%
}\left( l-k\right) $ and $g=\frac{1}{2}\left( l+k\right) $. Then if we sum
and subtract $f,$ $g,$ respectively, we have $g+f=l$ and $g-f=k.$ By the
condition 2 of Lemma 1, the mappings $g-f$ and $g+f$ are $\left( g\text{, }%
Q(I)\right) -$convex on $I,$ so $l,k$ are $\left( g\text{, }Q(I)\right) -$%
convex mappings on $I$ too.
\end{proof}

\begin{theorem}
Let a nonnegative function $g:I\subseteq 
\mathbb{R}
\rightarrow 
\mathbb{R}
$ belong to the class of $Q(I)$ and the real function $f:I\subseteq 
\mathbb{R}
\rightarrow 
\mathbb{R}
$ is $\left( g\text{, }Q(I)\right) -$convex dominated on $I.$ If $a,b\in I$
with $a<b$ and $f,g\in L_{1}[a,b],$ then one has the inequalities:%
\begin{equation*}
\left\vert \frac{4}{b-a}\int_{a}^{b}f\left( x\right) dx-f\left( \frac{a+b}{2}%
\right) \right\vert \leq \frac{4}{b-a}\int_{a}^{b}g\left( x\right)
dx-g\left( \frac{a+b}{2}\right) 
\end{equation*}%
and%
\begin{equation*}
\left\vert \frac{f\left( a\right) +f\left( b\right) }{2}-\frac{1}{b-a}%
\int_{a}^{b}p\left( x\right) f\left( x\right) dx\right\vert \leq \frac{%
g\left( a\right) +g\left( b\right) }{2}-\frac{1}{b-a}\int_{a}^{b}p\left(
x\right) g\left( x\right) dx
\end{equation*}%
for all $x,y\in I$ and $p\left( x\right) $ as in Theorem 1$.$
\end{theorem}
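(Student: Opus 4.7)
The plan is to reduce the two claimed inequalities to Theorem 1 applied to the auxiliary mappings $g+f$ and $g-f$, exploiting the characterisation given in the preceding Lemma. By the equivalence $1 \Longleftrightarrow 2$ of Lemma 1, from the hypothesis that $f$ is $(g,Q(I))$-convex dominated I know that both $g+f$ and $g-f$ satisfy the defining inequality of the class $Q(I)$ on $I$. Since both mappings are integrable on $[a,b]$ (as linear combinations of $f,g \in L_{1}[a,b]$), Theorem 1 can be applied to each of them.

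To obtain the first inequality, I would apply the first part of Theorem 1 separately to $g+f$ and to $g-f$. The first application yields
\begin{equation*}
(g+f)\!\left(\tfrac{a+b}{2}\right) \leq \frac{4}{b-a}\int_{a}^{b}(g+f)(x)\,dx,
\end{equation*}
which after rearrangement gives
\begin{equation*}
f\!\left(\tfrac{a+b}{2}\right) - \frac{4}{b-a}\int_{a}^{b} f(x)\,dx \leq \frac{4}{b-a}\int_{a}^{b} g(x)\,dx - g\!\left(\tfrac{a+b}{2}\right).
\end{equation*}
The second application, to $g-f$, produces the reversed rearrangement
\begin{equation*}
\frac{4}{b-a}\int_{a}^{b} f(x)\,dx - f\!\left(\tfrac{a+b}{2}\right) \leq \frac{4}{b-a}\int_{a}^{b} g(x)\,dx - g\!\left(\tfrac{a+b}{2}\right).
\end{equation*}
Combining both one-sided bounds via the definition of absolute value delivers the first claimed inequality.

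For the second inequality I would apply the \emph{second} part of Theorem 1 to $g+f$ and $g-f$ in exactly the same manner, using the weight $p(x)=(b-x)(x-a)/(b-a)^{2}$. The upper bound for $g+f$ yields $\frac{1}{b-a}\int_{a}^{b}p(x)f(x)\,dx - \frac{f(a)+f(b)}{2}$ bounded by $\frac{g(a)+g(b)}{2} - \frac{1}{b-a}\int_{a}^{b}p(x)g(x)\,dx$, while the one for $g-f$ yields the reversed sign on the left; taking the maximum of the two one-sided bounds gives the absolute-value estimate.

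The only subtlety I anticipate is that the class $Q(I)$ is defined with a nonnegativity requirement, whereas $g\pm f$ need not be nonnegative a priori. However, the proofs of the two inequalities in Theorem 1 rely solely on the functional inequality (\ref{1}) and not on sign, so the argument carries through verbatim once (\ref{1}) is established for $g\pm f$ through Lemma 1. This is the one technical point I would flag; otherwise the derivation is a routine splitting of a two-sided absolute-value estimate into its two one-sided components.
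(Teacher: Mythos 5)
Your argument is correct, but it follows a different route from the paper's. The paper proves Theorem 3 directly from Definition 1: for the first inequality it substitutes $\lambda=\tfrac12$, $x=ta+(1-t)b$, $y=(1-t)a+tb$ into the domination inequality and integrates over $t\in[0,1]$; for the second it takes $x=a$, $y=b$, multiplies through by $t(1-t)$, adds the two resulting inequalities and integrates, in effect re-running the computation behind Theorem 1 inside the absolute value. You instead use the equivalence $1\Longleftrightarrow 2$ of Lemma 1 to split the two-sided condition into the two one-sided $Q(I)$-type inequalities for $g+f$ and $g-f$, invoke Theorem 1 for each, and recombine. Your route is more modular and shorter, since the integral identities $\int_{0}^{1}f(ta+(1-t)b)\,dt=\frac{1}{b-a}\int_{a}^{b}f(x)\,dx$ and $\int_{0}^{1}t(1-t)f(ta+(1-t)b)\,dt=\frac{1}{b-a}\int_{a}^{b}p(x)f(x)\,dx$ are used only once, inside Theorem 1. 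The price is exactly the subtlety you flag: $Q(I)$ is defined for nonnegative functions, and $g\pm f$ need not be nonnegative, so you must observe that the proof of Theorem 1 uses only the functional inequality (\ref{1}) and never the sign of the function; since you state this explicitly, the argument is complete. The paper's direct computation sidesteps that issue entirely at the cost of repeating the integration argument.
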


\begin{proof}
By Definition 1 with $\lambda =\frac{1}{2},\ x=ta+(1-t)b,\ y=\left(
1-t\right) a+tb\ $and$\ t\in \left[ 0,1\right] ,$ as the mapping $f$ is $%
\left( g\text{, }Q(I)\right) -$convex dominated function, we have that%
\begin{eqnarray*}
&&\left\vert 2\left[ f\left( ta+\left( 1-t\right) b\right) +f\left( \left(
1-t\right) a+tb\right) \right] -f\left( \frac{a+b}{2}\right) \right\vert \\
&\leq &2\left[ g\left( ta+\left( 1-t\right) b\right) +g\left( \left(
1-t\right) a+tb\right) \right] -g\left( \frac{a+b}{2}\right) .
\end{eqnarray*}%
Integrating the above inequality over $t$ on $\left[ 0,1\right] ,$ the first
inequality is proved.

Since $f$ is $\left( g\text{, }Q(I)\right) -$convex dominated using
Definition 1 with$\ x=a,\ y=b\ $and$\ t\in \left[ 0,1\right] ,$ we can write%
\begin{eqnarray*}
&&\left\vert \left( 1-t\right) f\left( a\right) +tf\left( b\right) -t\left(
1-t\right) f\left( ta+\left( 1-t\right) b\right) \right\vert \\
&\leq &\left( 1-t\right) g\left( a\right) +tg\left( b\right) -t\left(
1-t\right) g\left( ta+\left( 1-t\right) b\right)
\end{eqnarray*}%
and%
\begin{eqnarray*}
&&\left\vert tf\left( a\right) +\left( 1-t\right) f\left( b\right) -t\left(
1-t\right) f\left( \left( 1-t\right) a+tb\right) \right\vert \\
&\leq &tg\left( a\right) +\left( 1-t\right) g\left( b\right) -t\left(
1-t\right) g\left( \left( 1-t\right) a+tb\right) .
\end{eqnarray*}%
Then, adding above inequalities we have 
\begin{eqnarray*}
&&\left\vert \left[ f\left( a\right) +f\left( b\right) \right] -t\left(
1-t\right) \left[ f\left( ta+\left( 1-t\right) b\right) +f\left( \left(
1-t\right) a+tb\right) \right] \right\vert \\
&\leq &\left[ g\left( a\right) +g\left( b\right) \right] -t\left( 1-t\right) %
\left[ g\left( ta+\left( 1-t\right) b\right) +g\left( \left( 1-t\right)
a+tb\right) \right] .
\end{eqnarray*}%
Integrating the resulting inequality over $t$ on $\left[ 0,1\right] ,$ we
get the second inequality. The proof is completed.
\end{proof}

\section{$\left( g\text{, }P(I)\right) -$convex dominated functions}

\begin{definition}
Let a nonnegative function $g:I\subseteq 
\mathbb{R}
\rightarrow 
\mathbb{R}
$ belong to the class of $P(I).$ The real function $f:I\subseteq 
\mathbb{R}
\rightarrow 
\mathbb{R}
$ is called $\left( g\text{, }P(I)\right) -$convex dominated on $I$ if the
following condition is satisfied;%
\begin{eqnarray}
&&\left\vert \left[ f(x)+f(y)\right] -f\left( \lambda x+\left( 1-\lambda
\right) y\right) \right\vert   \label{h2} \\
&&  \notag \\
&\leq &\left[ g(x)+g(y)\right] -g\left( \lambda x+\left( 1-\lambda \right)
y\right)   \notag
\end{eqnarray}%
for all $x,y\in I$ and $\lambda \in \left[ 0,1\right] $.
\end{definition}

The next simple characterisation of $\left( g\text{, }P(I)\right) -$convex
dominated functions holds.

\begin{lemma}
Let a nonnegative function $g:I\subseteq 
\mathbb{R}
\rightarrow 
\mathbb{R}
$ belong to the class of $P(I)$ and $f:I\subseteq 
\mathbb{R}
\rightarrow 
\mathbb{R}
$ be a real function. The following statements are equivalent:
\end{lemma}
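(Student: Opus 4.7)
The plan is to follow the structure of the proof of Lemma~1 almost verbatim, replacing the $Q(I)$-type functional inequality with the $P(I)$-type one throughout. Because the nonlinear weights $\lambda^{-1}$ and $(1-\lambda)^{-1}$ appearing in the $Q(I)$ case collapse here to the constant $1$, the algebra actually simplifies.

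For $1\Longleftrightarrow 2$, I would unfold the absolute value in the defining inequality (\ref{h2}) into the two-sided chain
\begin{equation*}
g(\lambda x+(1-\lambda)y) - [g(x)+g(y)] \leq [f(x)+f(y)] - f(\lambda x+(1-\lambda)y) \leq [g(x)+g(y)] - g(\lambda x+(1-\lambda)y),
\end{equation*}
then rearrange the right-hand inequality as
\begin{equation*}
(g+f)(\lambda x+(1-\lambda)y) \leq (g+f)(x) + (g+f)(y)
\end{equation*}
and the left-hand one as
\begin{equation*}
(g-f)(\lambda x+(1-\lambda)y) \leq (g-f)(x) + (g-f)(y),
\end{equation*}
which are precisely the $P(I)$-convex functional inequalities for $g+f$ and $g-f$ respectively. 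Conversely, summing and subtracting these two inequalities recovers (\ref{h2}), so the equivalence is immediate.

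For $2\Longleftrightarrow 3$, I would introduce the linear change of variables $l := g+f$ and $k := g-f$, which is invertible and yields $f = \tfrac{1}{2}(l-k)$ and $g = \tfrac{1}{2}(l+k)$. Under this substitution, the $\left(g, P(I)\right)$-convexity of $g+f$ and $g-f$ required by statement~2 translates directly into the $\left(g, P(I)\right)$-convexity of $l$ and $k$ demanded by statement~3, and the reverse implication is obtained by reading the same substitution in the opposite direction.

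The main obstacle I anticipate is not analytic but purely notational: one must split the absolute value correctly and track the sign changes when moving $f$ to the opposite side of the chain. The nonnegativity clause built into "$g\in P(I)$" poses no real difficulty here because, exactly as in Lemma~1, the only content of $P(I)$-convexity invoked in the proof is the functional inequality itself.
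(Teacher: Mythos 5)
Your proposal matches the paper's proof essentially verbatim: the same two-sided unfolding of the absolute value in (\ref{h2}) for $1\Longleftrightarrow 2$, and the same substitution $l=g+f$, $k=g-f$ (equivalently $f=\tfrac{1}{2}(l-k)$, $g=\tfrac{1}{2}(l+k)$) for $2\Longleftrightarrow 3$. The only slip is cosmetic: it is the \emph{left}-hand inequality of the chain that rearranges to $\left( g+f\right) \left( \lambda x+\left( 1-\lambda \right) y\right) \leq \left( g+f\right) (x)+\left( g+f\right) (y)$ and the \emph{right}-hand one that yields the corresponding statement for $g-f$, not the other way around; this does not affect the argument.
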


\begin{enumerate}
\item $f$ is $\left( g\text{, }P(I)\right) -$convex dominated on $I.$

\item The mappings $g-f$ and $g+f$ are $\left( g\text{, }P(I)\right) -$
convex on $I.$

\item There exist two $\left( g\text{, }P(I)\right) -$convex mappings $l,k$
defined on $I$ such that%
\begin{equation*}
\begin{array}{ccc}
f=\frac{1}{2}\left( l-k\right) & \text{and} & g=\frac{1}{2}\left( l+k\right)%
\end{array}%
.
\end{equation*}
\end{enumerate}

\begin{proof}
1$\Longleftrightarrow $2 The condition (\ref{h2}) is equivalent to%
\begin{eqnarray*}
&&g\left( \lambda x+\left( 1-\lambda \right) y\right) -\left[ g\left(
x\right) +g\left( y\right) \right] \\
&\leq &\left[ f\left( x\right) +f\left( y\right) \right] -f\left( \lambda
x+\left( 1-\lambda \right) y\right) \\
&\leq &\left[ g\left( x\right) +g\left( y\right) \right] -g\left( \lambda
x+\left( 1-\lambda \right) y\right)
\end{eqnarray*}%
for all $x,y\in I$ and $\lambda \in \left[ 0,1\right] .$ The two
inequalities may be rearranged as%
\begin{equation*}
\left( g+f\right) \left( \lambda x+\left( 1-\lambda \right) y\right) \leq
\left( g+f\right) (x)+\left( g+f\right) (y)
\end{equation*}%
and%
\begin{equation*}
\left( g-f\right) \left( \lambda x+\left( 1-\lambda \right) y\right) \leq
\left( g-f\right) (x)+\left( g-f\right) (y)
\end{equation*}%
which are equivalent to the $\left( g\text{, }P(I)\right) -$convexity of $%
g+f $ and $g-f,$ respectively.

2$\Longleftrightarrow $3 Let we define the mappings $f,$ $g$ as $f=\frac{1}{2%
}\left( l-k\right) $ and $g=\frac{1}{2}\left( l+k\right) $. Then if we sum
and subtract $f,$ $g,$ respectively, we have $g+f=l$ and $g-f=k.$ By the
condition 2 of Lemma 2, the mappings $g-f$ and $g+f$ are $\left( g\text{, }%
P(I)\right) -$convex on $I,$ so $l,k$ are $\left( g\text{, }P(I)\right) -$%
convex mappings on $I$ too.
\end{proof}

\begin{theorem}
Let a nonnegative function $g:I\subseteq 
\mathbb{R}
\rightarrow 
\mathbb{R}
$ belong to the class of $P(I).$ The real function $f:I\subseteq 
\mathbb{R}
\rightarrow 
\mathbb{R}
$ is $\left( g\text{, }P(I)\right) -$convex dominated on $I.$ If $a,b\in I$
with $a<b$ and $f,g\in L_{1}[a,b],$ then one has the inequalities:%
\begin{equation*}
\left\vert \frac{2}{b-a}\int_{a}^{b}f\left( x\right) dx-f\left( \frac{a+b}{2}%
\right) \right\vert \leq \frac{2}{b-a}\int_{a}^{b}g\left( x\right)
dx-g\left( \frac{a+b}{2}\right) 
\end{equation*}%
and%
\begin{equation*}
\left\vert \left[ f\left( a\right) +f\left( b\right) \right] -\frac{1}{b-a}%
\int_{a}^{b}f\left( x\right) dx\right\vert \leq \left[ g\left( a\right)
+g\left( b\right) \right] -\frac{1}{b-a}\int_{a}^{b}g\left( x\right) dx
\end{equation*}%
for all $x,y\in I.$
\end{theorem}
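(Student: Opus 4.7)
The plan is to mirror, step for step, the strategy used in Theorem~2, replacing the defining inequality (\ref{h1}) by its $P(I)$-analogue (\ref{h2}). Throughout I will exploit the elementary fact that if $|A(t)|\le B(t)$ pointwise on $[0,1]$, then $\bigl|\int_0^1 A(t)\,dt\bigr|\le \int_0^1 |A(t)|\,dt\le \int_0^1 B(t)\,dt$, which is exactly the mechanism that moves the absolute value outside the integral in the final bound.

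For the first inequality, I would apply Definition~2 with the specific choices $\lambda=\tfrac12$, $x=ta+(1-t)b$, and $y=(1-t)a+tb$, noting that $\tfrac12 x+\tfrac12 y=\tfrac{a+b}{2}$. This yields, for every $t\in[0,1]$,
\begin{equation*}
\bigl|f(ta+(1-t)b)+f((1-t)a+tb)-f(\tfrac{a+b}{2})\bigr|\le g(ta+(1-t)b)+g((1-t)a+tb)-g(\tfrac{a+b}{2}).
\end{equation*}
Integrating in $t$ over $[0,1]$ and using the standard change of variable $u=ta+(1-t)b$ (together with the symmetric substitution $u=(1-t)a+tb$) gives
$\int_0^1 f(ta+(1-t)b)\,dt=\int_0^1 f((1-t)a+tb)\,dt=\frac{1}{b-a}\int_a^b f(x)\,dx$, and likewise for $g$. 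Pulling the absolute value out of the $t$-integral via the triangle inequality produces exactly the first claimed bound.

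For the second inequality, I would instead set $x=a$, $y=b$, and $\lambda=t\in[0,1]$ in (\ref{h2}), giving
\begin{equation*}
\bigl|[f(a)+f(b)]-f(ta+(1-t)b)\bigr|\le [g(a)+g(b)]-g(ta+(1-t)b).
\end{equation*}
Integrating over $t\in[0,1]$ and again using the change of variable $u=ta+(1-t)b$ to rewrite $\int_0^1 f(ta+(1-t)b)\,dt=\frac{1}{b-a}\int_a^b f(x)\,dx$ (and similarly for $g$), the triangle inequality for integrals yields the second assertion.

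There is essentially no serious obstacle here: the argument is a direct analogue of the proof of Theorem~2, and the only subtlety worth flagging is the use of the triangle inequality at the right moment to bring the absolute value outside the integral. The $P(I)$-setup is in fact slightly cleaner than the $Q(I)$-setup because there are no $\lambda$ or $(1-\lambda)$ denominators to track, so the weight $p(x)$ appearing in Theorem~2 does not enter here and the integral means come out directly as $\frac{1}{b-a}\int_a^b(\cdot)\,dx$.
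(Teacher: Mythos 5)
Your proposal is correct and follows essentially the same route as the paper's own proof: the same substitutions $\lambda=\tfrac12$, $x=ta+(1-t)b$, $y=(1-t)a+tb$ for the first inequality and $x=a$, $y=b$, $\lambda=t$ for the second, followed by integration over $t\in[0,1]$ and the triangle inequality for integrals. The only difference is that you make the change-of-variable step and the passage of the absolute value through the integral explicit, which the paper leaves implicit.
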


\begin{proof}
By Definition 4 with $\lambda =\frac{1}{2},\ x=ta+(1-t)b,\ y=\left(
1-t\right) a+tb\ $and$\ t\in \left[ 0,1\right] ,$ as the mapping $f$ is $%
\left( g\text{, }P(I)\right) -$convex dominated function, we have%
\begin{eqnarray*}
&&\left\vert \left[ f\left( ta+\left( 1-t\right) b\right) +f\left( \left(
1-t\right) a+tb\right) \right] -f\left( \frac{a+b}{2}\right) \right\vert \\
&\leq &\left[ g\left( ta+\left( 1-t\right) b\right) +g\left( \left(
1-t\right) a+tb\right) \right] -g\left( \frac{a+b}{2}\right) .
\end{eqnarray*}%
Integrating the above inequality over $t$ on $\left[ 0,1\right] ,$ the first
inequality is proved.

Since $f$ is $\left( g\text{, }P(I)\right) -$convex dominated using
Definition 4 with$\ x=a,\ y=b\ $and$\ t\in \left[ 0,1\right] ,$ we can write%
\begin{eqnarray*}
&&\left\vert \left[ f\left( a\right) +f\left( b\right) \right] -f\left(
ta+\left( 1-t\right) b\right) \right\vert \\
&\leq &g\left( a\right) +g\left( b\right) -g\left( ta+\left( 1-t\right)
b\right) .
\end{eqnarray*}%
Integrating the above resulting inequality over $t$ on $\left[ 0,1\right] $
we get the second inequality. The proof is completed.
\end{proof}

\end{document}